\newtheorem{theorem}{Theorem}[section]
\newtheorem{corollary}[theorem]{Corollary}
\newtheorem{proposition}[theorem]{Proposition}
\newtheorem{definition}[theorem]{Definition}
\newtheorem{example}[theorem]{Example}
\begin{document}
\lhead{G. Budak\c{c}\i \; \& H. Oru\c{c}}
\rhead{$q$-Peano Kernel and Its Applications}

\title{$q$-Peano Kernel and Its Applications
}



\author{G\"{u}lter Budak\c{c}\i%
  \thanks{Electronic address: \texttt{gulter.budakci@deu.edu.tr}}}
\affil{Department of Mathematics, Dokuz Eyl\"ul University \\ Fen Bilimleri Enstit\"us\"u,
 T{\i}naztepe Kamp\"us\"u, 35390 Buca, \.Izmir }
 
 \author{Halil Oru\c{c}%
   \thanks{Electronic address: \texttt{halil.oruc@deu.edu.tr}; Corresponding author}}
 \affil{Department of Mathematics, Dokuz Eyl\"ul University \\ Fen Fak\"ultesi,
  T{\i}naztepe Kamp\"us\"u, 35390 Buca, \.Izmir}



\date{}
\maketitle

\begin{abstract}
We introduce a $q$-analogue of the Peano kernel theorem by replacing ordinary derivatives and integrals by quantum derivatives and quantum integrals. In the limit $q\rightarrow 1$, the  $q$-Peano kernel reduces to the classical Peano kernel. We also give applications to polynomial interpolation and construct examples in which classical remainder theory fails whereas $q$-Peano kernel works. Furthermore we derive a relation between $q$-B-splines and divided differences via the $q$-Peano kernel.\\ 
\textbf{Keywords}: Peano Kernel - $q$-Taylor formula - divided differences - quantum derivatives - quantum integrals - $q$-B-splines\\
\textbf{Mathematics Subject Classification (2000)} 65D07 \and 65D17 \and 41A15
\end{abstract}

\section{Introduction}
The Peano kernel theorem provides a useful technique for computing the errors of approximations such as interpolation, quadrature rules and B-splines. The errors are represented by a linear functional that operates on functions $f \in {C}^{n+1}[a,b]$ and annihilates all polynomials of degree at most $n$.\\

\noindent Namely, if $L(f)=0$ for all $f\in \mathcal{P}_n$, the space of polynomials of degree $n$, then
\[ L(f)=\int\limits_{a}^{b}f^{(n+1)}(t)K(x,t)dt, \]
where $K(x,t)=\dfrac{1}{n!}L\left((x-t)_+^n\right).$

An important application of this result is the Kowalewski's interpolating polynomial remainder. Let $t_0,t_1,\ldots,t_n \in [a,b]$ be fixed and distinct, and  
\[ L(f)=f(x)-\sum\limits_{k=0}^{n}f(t_k)l_{nk}(x) \]
where $l_{nk}(x)=\prod\limits_{\substack{v=0 \\ v\neq k}}^{n}\dfrac{x-t_v}{t_k-t_v}$. If $f\in C^{m+1}[a,b]$, then
\[ L(f)=\frac{1}{m!}\sum\limits_{k=0}^{n}l_{nk}(x)\int\limits_{t_k}^{x}(t_k-t)^mf^{(m+1)}(t)dt, \quad \textrm{for each }\; m=0,1,\ldots,n \]
is the error functional, see \cite{hammer}. Our purpose is to extend the Peano kernel when classical derivatives are replaced by $q$-derivatives. This extension is important because there are  functions whose $q$-derivatives exist but whose classical derivatives  fail to exist.

Section 2 contains definitions and properties of the quantum calculus which we use in the next sections. In Section 3, we give the  $q$-Taylor theorem and develop a $q$-analogue of the Peano kernel ($q$-Peano kernel). Furthermore, we present a simple way to find the kernel under some conditions. Section 4 demonstrates how the $q$-Peano kernel is used to find the error of Lagrange interpolation. A $q$-analogue of the trapezoidal rule is also given. Moreover, we discuss the error bounds of quadrature formula on the remainder. Finally, we establish a relation between the $q$-B-splines and the $q$-Peano kernel in Section 5.

\section{Preliminaries}

We begin by giving basic definitions and theorems of the $q$-calculus that are required in the next section.

\noindent For a fixed parameter $q\neq 1$, the $q$-derivatives are defined by,
\begin{eqnarray} D_qf(t)&=&\frac{f(qt)-f(t)}{(q-1)t} \nonumber \label{qqderivative}\\
D_q^nf(t)&=&D_q(D_q^{n-1}f(t)), \quad n\geqslant 2.\nonumber
 \end{eqnarray}
Note that if $f$ is a differentiable function, then 
\begin{eqnarray*}
\lim\limits_{q\to 1} D_qf(x)&=&Df(x).\nonumber
\end{eqnarray*}
For polynomials the $q$-derivative is easy to compute. Indeed it follows easily
from the definition of the $q$-derivative that
\[ D_qx^n=[n]_qx^{n-1}, \]
where the $q$-integers $[n]_q$ are defined by,
 \begin{equation*}
 [n]_q=\left\{\begin{array}{lcc}
 (1-q^n)/(1-q), & q \neq 1,\\
 n, & q =1.
 \end{array}\right.
 \end{equation*}
Moreover, the $q$-factorial is defined by
\[ [n]_q!=[1]_q\cdots[n]_q. \]  

 Quantum integrals are the analogues of classical integrals for the quantum calculus. Quantum integrals satisfy a quantum version of the fundamental theorem of calculus, see \cite{quantumcalculusbook} for details.
 
 \begin{definition}
 Let $0<a<b$. Then the definite $q$-integral of a function $f(x)$ is defined by 
 \[ \int_{0}^{b}f(x)d_qx=(1-q)b\sum_{i=0}^{\infty}q^if(q^ib) \]
 and
 \[ \int_{a}^{b}f(x)d_qx=\int_{0}^{b}f(x)d_qx-\int_{0}^{a}f(x)d_qx. \]
 \end{definition}
 \begin{theorem}\label{funda}[Fundamental Theorem of Calculus]\\
 If $F(x)$ is continuous at $x=0$, then
 \[ \int_{a}^{b}D_qF(x)d_qx=F(b)-F(a)\]
 where $0\leqslant a<b\leqslant \infty.$
 \end{theorem}
 
 The work \cite{mean} gives the  mean value theorem in the $q$-calculus which will be needed in one of our results.
 
 \begin{theorem}\label{mean}
 If  $F$ is  continuous and $G$ is $1/q$-integrable and is nonnegative(or nonpositive) on $[a,b]$, then there exists $\tilde{q}\in (1,\infty)$ such that
for all  $q>\tilde{q}$ there exists a   $\xi\in (a,b)$  for which
\[ \int\limits_{a}^{b}F(x)G(x)d_{1/q}x=F(\xi)\int\limits_{a}^{b}G(x)d_{1/q}x.  \]
 \end{theorem}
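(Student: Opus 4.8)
The plan is to follow the classical proof of the first (weighted) mean value theorem for integrals, transplanting each step into the Jackson $1/q$-calculus. Since $F$ is continuous on the closed interval $[a,b]$, it attains $m=\min_{[a,b]}F$ and $M=\max_{[a,b]}F$. Assuming $G\geq 0$ on $[a,b]$ (the nonpositive case follows by applying the result to $-G$ and reversing every inequality), I would start from the pointwise bounds $m\,G(x)\leq F(x)G(x)\leq M\,G(x)$. The aim is to integrate these to obtain
\[ m\int_a^b G\,d_{1/q}x \;\leq\; \int_a^b FG\,d_{1/q}x \;\leq\; M\int_a^b G\,d_{1/q}x, \]
then divide by $\int_a^b G\,d_{1/q}x$ and apply the intermediate value theorem to the continuous function $F$ to produce the point $\xi$.

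The hard part will be the passage from the pointwise inequality to the integrated one, i.e. establishing that the $1/q$-integral is a monotone (positivity-preserving) functional on nonnegative integrands; this is exactly the step that forces a restriction on $q$. By the definition of the Jackson integral, $\int_a^b H\,d_{1/q}x=\int_0^b H\,d_{1/q}x-\int_0^a H\,d_{1/q}x$ is a \emph{difference} of two infinite weighted sums of values of $H$ on the geometric node sets $\{q^{-i}b\}_{i\geq0}$ and $\{q^{-i}a\}_{i\geq0}$, which accumulate at $0$ and in general fall outside $[a,b]$ and do not cancel pairwise unless $a/b$ is an integer power of $1/q$. Because of this subtraction, $H\geq 0$ on $[a,b]$ does \emph{not} by itself force $\int_a^b H\,d_{1/q}x\geq 0$: the two tails can fail to cancel and leave a residual of the wrong sign. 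I would therefore isolate and prove a positivity lemma for the $1/q$-integral — that $\int_a^b(M-F)G\,d_{1/q}x\geq 0$ and $\int_a^b(F-m)G\,d_{1/q}x\geq 0$ — valid for all $q$ in the range singled out by the hypothesis, where the dominant quadrature weights concentrate so as to behave like a genuinely positive functional over $[a,b]$. Pinning down the correct range and controlling these tail contributions is precisely the content compressed into the phrase ``there exists $\tilde q\in(1,\infty)$ such that for all $q>\tilde q$,'' and I expect it to be the principal obstacle.

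Granting that lemma, the remainder is routine. If $\int_a^b G\,d_{1/q}x=0$, the sandwich forces $\int_a^b FG\,d_{1/q}x=0$ as well, and any $\xi\in(a,b)$ satisfies the identity trivially. Otherwise $\int_a^b G\,d_{1/q}x>0$, and dividing the sandwich by it shows that the quotient $\bigl(\int_a^b FG\,d_{1/q}x\bigr)\big/\bigl(\int_a^b G\,d_{1/q}x\bigr)$ lies in $[m,M]$. Since $F$ is continuous and attains both $m$ and $M$ on $[a,b]$, the intermediate value theorem yields a $\xi$ with $F(\xi)$ equal to this quotient, which rearranges to the claimed equality. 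To guarantee $\xi\in(a,b)$ rather than at an endpoint, I would note that if $F$ is nonconstant the quotient is strictly interior to $[m,M]$, so $\xi$ may be taken in the open interval, while the constant case is immediate.
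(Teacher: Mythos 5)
First, a point of reference: the paper does not prove this theorem at all --- it is imported verbatim from the cited work of Rajkovi\'c, Stankovi\'c and Marinkovi\'c \cite{mean} --- so there is no in-paper proof to compare against, and your proposal has to be judged against what actually makes such a statement true. Your architecture (extreme values $m,M$ of $F$, a sandwich of $1/q$-integrals, then the intermediate value theorem) is the standard one, and your diagnosis that everything hinges on a positivity lemma --- whether $H\geq 0$ on $[a,b]$ forces $\int_a^b H\,d_{1/q}x\geq 0$ --- is exactly right. But you then defer that lemma (``granting that lemma, the remainder is routine''). It is not a technical preliminary; it is the entire content of the theorem, since the rest of your argument is classical boilerplate. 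As written, the proposal proves nothing.

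The deeper problem is that the lemma you intend to prove is \emph{false} in the regime you (following the paper's statement) are working in, so the gap cannot be closed along the lines you describe. For bounded $H$,
\[ \int_a^b H(x)\,d_{1/q}x=\Bigl(1-\tfrac{1}{q}\Bigr)\sum_{i\geq 0}q^{-i}\Bigl(bH\bigl(q^{-i}b\bigr)-aH\bigl(q^{-i}a\bigr)\Bigr)\;\longrightarrow\; bH(b)-aH(a)\qquad \text{as } q\to\infty, \]
because the terms with $i\geq 1$ contribute $O(1/q)$ in total. The limiting functional is a \emph{difference of endpoint evaluations}, not a positive functional: concentration of the quadrature weights is precisely what destroys positivity, the opposite of your heuristic. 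Concretely, the theorem as printed fails for all large $q$: take $[a,b]=[1,2]$, $G\equiv 1$ (so $\int_1^2 G\,d_{1/q}x=1$ exactly) and $F(x)=\min(x,2-x)$, continuous on $[0,2]$ and positive on $(1,2)$; then $\int_1^2 FG\,d_{1/q}x\to 2F(2)-F(1)=-1$, so for all sufficiently large $q$ the left-hand side is negative, while $F(\xi)\int_1^2 G\,d_{1/q}x=F(\xi)>0$ for every $\xi\in(1,2)$, and no $\xi$ exists. The version of the theorem that is true --- and the one treated in \cite{mean} --- lives in the opposite regime, base $1/q\to 1^-$ (i.e.\ $1<q<\tilde q$ rather than $q>\tilde q$, so the quantifier in the paper appears to be a transcription error): there the key lemma is not positivity-by-concentration but the convergence of the Jackson integral to the Riemann integral, after which your sandwich, or a perturbation of the classical mean value theorem, does go through. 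A further minor flaw: even classically, your claim that nonconstant $F$ makes the quotient strictly interior to $[m,M]$ is wrong (let $G$ vanish off the set where $F$ attains its maximum); placing $\xi$ in the open interval requires a separate argument.
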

 
 We also require a $q$-H\"older inequality and appropriate notions of distance in $q$-integrals, see \cite{intelligent}, \cite{gauchman} and \cite{tariboon}.
 
 \begin{definition}
  We will denote by $L_{p,q}([0,b])$ with $1\leqslant p<\infty$  the set of all functions $f$ on $[0,b]$ such that
  \[ ||f||_{p,q}:=\left(\int\limits_{0}^{b}|f|^pd_{1/q}t\right)^{\frac{1}{p}}<\infty.\]
Furthermore let $L_{\infty,q}([0,b])$ denote the set of all functions $f$ on $[0,b]$ such that
 \[ ||f||_{\infty,q}:=\sup\limits_{x\in[0,b]}|f(x)|<\infty.\]
 
  \end{definition}

  \begin{theorem}
 Let $x\in [0,b]$, $q\in [1,\infty)$ and $p_1,p_2>1$ be such that $\frac{1}{p_1}+\frac{1}{p_2}=1$. Then 
 \[ \int\limits_{0}^{x}|f(x)||g(x)|d_{1/q}t\leqslant \left(\int\limits_{0}^{x}|f(x)|^{p_1}d_{1/q}t\right)^\frac{1}{p_1}\left(\int\limits_{0}^{x}|g(x)|^{p_2}d_{1/q}t\right)^\frac{1}{p_2}. \]
  
 \end{theorem}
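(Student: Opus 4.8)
The plan is to reduce this $q$-Hölder inequality to the elementary Young inequality, exactly as in the classical case, exploiting the fact that the $1/q$-integral is an infinite series with nonnegative weights. First I would record the two structural properties of the $1/q$-integral that make the argument work: linearity, and positivity, i.e.\ if $h\geqslant 0$ on $[0,x]$ then $\int_0^x h\,d_{1/q}t\geqslant 0$. For $q>1$ both are immediate from the series representation
\[ \int_0^x h(t)\,d_{1/q}t=\left(1-\tfrac1q\right)x\sum_{i=0}^{\infty}q^{-i}\,h(q^{-i}x), \]
since every sampling weight $\left(1-\tfrac1q\right)x\,q^{-i}$ is nonnegative; the boundary value $q=1$ is the classical Hölder inequality and may be obtained by the limit $q\to 1$ or cited directly.

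Next I would dispose of the degenerate cases. If either $\|f\|_{p_1,q}$ or $\|g\|_{p_2,q}$ is zero or infinite the claimed bound is trivial, so I may assume $0<\|f\|_{p_1,q},\|g\|_{p_2,q}<\infty$ and normalise by setting $F=|f|/\|f\|_{p_1,q}$ and $G=|g|/\|g\|_{p_2,q}$, so that $\int_0^x F^{p_1}\,d_{1/q}t=\int_0^x G^{p_2}\,d_{1/q}t=1$. The finiteness of these norms is also what guarantees convergence of the defining series for the relevant powers of $f$ and $g$.

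The core step is Young's inequality: for nonnegative reals $a,b$ and conjugate exponents $\tfrac{1}{p_1}+\tfrac{1}{p_2}=1$ one has $ab\leqslant a^{p_1}/p_1+b^{p_2}/p_2$. Applying this pointwise with $a=F(t)$, $b=G(t)$ gives the pointwise bound $F(t)G(t)\leqslant F(t)^{p_1}/p_1+G(t)^{p_2}/p_2$ on $[0,x]$. Integrating with the $1/q$-integral---here I use positivity to preserve the inequality and linearity to split the right-hand side---yields
\[ \int_0^x FG\,d_{1/q}t\leqslant \frac{1}{p_1}\int_0^x F^{p_1}\,d_{1/q}t+\frac{1}{p_2}\int_0^x G^{p_2}\,d_{1/q}t=\frac{1}{p_1}+\frac{1}{p_2}=1. \]
Undoing the normalisation, i.e.\ multiplying through by $\|f\|_{p_1,q}\|g\|_{p_2,q}$, gives precisely the asserted inequality.

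I do not expect any deep obstacle: the argument is the classical one transplanted to the quantum setting. The only point requiring genuine care is the positivity (monotonicity) of the $1/q$-integral, which is what licenses integrating a pointwise inequality term by term; this must be read off from the series representation above and holds cleanly for $q>1$, where all the sampling weights are strictly positive, with the boundary value $q=1$ folded back into the classical theorem.
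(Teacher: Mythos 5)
Your proof is correct, but there is nothing in the paper to compare it against: the paper does not prove this theorem at all. It is stated as a preliminary, imported from the literature (see \cite{intelligent}, \cite{gauchman}, \cite{tariboon}), so your argument stands on its own. What you propose --- normalise, apply Young's inequality $ab\leqslant a^{p_1}/p_1+b^{p_2}/p_2$ pointwise, then integrate --- is the standard classical proof, and it transplants correctly here because the $1/q$-integral over $[0,x]$ is a series with nonnegative weights $\bigl(1-\tfrac1q\bigr)x\,q^{-i}$ that samples only the points $q^{-i}x\in(0,x]$; this yields the linearity and monotonicity that the classical argument needs, and your explicit attention to this point is exactly the right care. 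Indeed, it is worth emphasising why your restriction to integrals based at $0$ matters: for a $q$-integral $\int_a^b$ with $a>0$, which is defined as a difference of two integrals from $0$, positivity of the integrand does \emph{not} in general imply positivity of the integral, and this pathology is precisely why the inequality is stated on $[0,x]$ rather than on a general interval. Two minor blemishes: in the degenerate case $\|f\|_{p_1,q}=0$ with $\|g\|_{p_2,q}=\infty$ the right-hand side is formally $0\cdot\infty$, so ``trivial'' is slightly glib --- the clean fix is to note that $\|f\|_{p_1,q}=0$ forces $f$ to vanish at every sampling point $q^{-i}x$, whence the left-hand side is $0$; and you silently corrected the statement's typo $|f(x)||g(x)|$ to $|f(t)||g(t)|$ in the integrand, which is indeed the intended reading.
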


\section{$q$-Peano Kernel Theorem}

In this section we derive a generalization of the Peano kernel theorem. This generalization is based on the $q$-Taylor expansion analogous to the proof of the  classical Peano kernel Theorem. So we start by giving the $q$-Taylor expansion with integral representation.
A detailed treatment of the classical Peano Kernel theorem can be found in \cite{hammer}, \cite{phillips} and \cite{powel}.

We use the notation $q$-$C^{k}[a,b]$ to denote the space of functions whose $q$-derivatives of order up to $k$ are continuous on $[a,b]$.\\


\begin{theorem} ($q$-Taylor Theorem)
Let  $f$ be $n+1$ times $1/q$-differentiable in the closed interval $[a,b]$. Then
\begin{equation}\label{qtaylor}
f(x)=\sum_{k=0}^{n}q^{k(k-1)/2}\frac{(D_{1/q}^kf)(q^ka)}{[k]_q!}(x-a)^{k,q}+R_n(f), 
\end{equation} 
where
\[ (x-t)^{n,q}=(x-q^{n-1}t)\cdots(x-qt)(x-t) \]
and
\[ R_n(f)=\frac{q^{n(n+1)/2}}{[n]_q!}\int_a^x (D_{1/q}^{n+1}f)(q^nt)(x-t)^{n,q}d_{1/q}t.\]
Another way to express the remainder $R_nf$ is to employ  the truncated power function. That is
\begin{equation}\label{(3.2)}
 R_n(f)=\frac{q^{n(n+1)/2}}{[n]_q!}\int_a^b (D_{1/q}^{n+1}f)(q^nt)(x-t)_+^{n,q}d_{1/q}t,
\end{equation}
\vspace{0.5cm}
where
\[ (x-t)_+^{n,q}=(x-q^{n-1}t)\cdots(x-qt)(x-t)_+. \]
\end{theorem}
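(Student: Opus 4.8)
The plan is to establish the $q$-Taylor formula with its integral remainder and then recast that remainder using the truncated power. The natural starting point is the $q$-analogue of the fundamental theorem of calculus (Theorem~\ref{funda}) together with $q$-integration by parts. For the base case $n=0$, one writes $f(x)-f(a)=\int_a^x (D_{1/q}f)(t)\,d_{1/q}t$, which follows directly from the fundamental theorem; the factor $q^n$ evaluating the $q$-derivative at $q^nt$ and the prefactor $q^{n(n+1)/2}$ appear as one iterates. The most efficient route is induction on $n$: assuming \eqref{qtaylor} holds with the stated remainder $R_{n-1}(f)$, one applies $q$-integration by parts to the integral defining $R_{n-1}$, differentiating the $q$-power factor $(x-t)^{n-1,q}$ and integrating the $q$-derivative of $f$. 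The key computational identity here is the $q$-derivative rule for the falling $q$-power, namely that $D_{1/q}$ (acting in the variable $t$) of $(x-t)^{n,q}$ reproduces $-[n]_q (x-t)^{n-1,q}$ up to the appropriate power of $q$; this is the $q$-analogue of $\frac{d}{dt}(x-t)^n = -n(x-t)^{n-1}$ and is what forces the $q^{k(k-1)/2}$ and $[k]_q!$ normalizations to line up.

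First I would verify this differentiation rule for $(x-t)^{n,q}=(x-q^{n-1}t)\cdots(x-t)$ in the $1/q$-calculus, since the product structure makes the $q$-Leibniz behavior nontrivial: the telescoping of the factors under the $q$-shift $t\mapsto t/q$ is exactly what produces a clean single falling $q$-power rather than a messy sum. Once that rule is in hand, each integration-by-parts step peels off one boundary term, which supplies the next explicit term $q^{k(k-1)/2}\frac{(D_{1/q}^kf)(q^ka)}{[k]_q!}(x-a)^{k,q}$ in the sum, while the leftover integral becomes $R_n(f)$ with the powers of $q$ correctly accumulated. Tracking the powers of $q$ through the shift $t\mapsto q^n t$ inside the integrand and through the boundary evaluations is the bookkeeping that must be done carefully, but it is routine once the differentiation rule is fixed.

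For the second form \eqref{(3.2)}, the passage from $\int_a^x$ to $\int_a^b$ is immediate: the truncated power $(x-t)_+^{n,q}$ vanishes for $t>x$ because the last factor $(x-t)_+$ is zero there, while all factors $(x-q^jt)$ for $j\ge 1$ are absorbed into the definition of the truncated $q$-power. Thus extending the upper limit from $x$ to $b$ adds nothing to the integral, and replacing $(x-t)^{n,q}$ by $(x-t)_+^{n,q}$ changes the integrand only on $[x,b]$ where it is now identically zero. I would check that the $q$-integral, being a discrete sum over the nodes $q^i b$, genuinely annihilates the terms with $q^i b > x$, so that the two representations agree exactly rather than merely formally.

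The main obstacle I anticipate is the $q$-differentiation rule for the falling $q$-power under the $1/q$-shift, and in particular keeping the exponents of $q$ consistent between the explicit Taylor terms and the remainder. Because $D_{1/q}$ involves the substitution $t\mapsto t/q$ and the integrand already carries the evaluation $(D_{1/q}^{n+1}f)(q^nt)$, one must ensure that each integration by parts produces precisely the prefactor $q^{n(n+1)/2}$ and the shifted argument $q^n a$ in the boundary term; an off-by-one error in these exponents is the most likely pitfall, and I would guard against it by checking the $n=1$ case explicitly and confirming that the whole formula collapses to the classical Taylor theorem with integral remainder as $q\to 1$.
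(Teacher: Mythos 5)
The paper never proves this theorem: it is stated as known, with pointers to the $q$-Taylor literature (\cite{salam}, \cite{oktay}, \cite{mourad}), so your proposal has to be judged on its own rather than against an in-paper argument. For the expansion \eqref{qtaylor} with the remainder written as an integral over $[a,x]$, your plan is essentially sound. The identity you single out is correct: the $1/q$-derivative of $(x-t)^{n,q}$ with respect to $t$ is exactly $-[n]_q(x-t)^{n-1,q}$ (no stray power of $q$), and induction with $q$-integration by parts goes through. One slip: the roles are the reverse of what you wrote. To pass from $R_{n-1}$ to the $k=n$ term plus $R_n$ you must \emph{anti}differentiate the power factor (via that identity) and $q$-differentiate the other factor $u(t)=(D_{1/q}^{n}f)(q^{n}t)$, for which the scaling rule gives $D_{1/q}u(t)=q^{n}(D_{1/q}^{n+1}f)(q^{n}t)$; the boundary term at $t=x$ vanishes, the boundary term at $t=a$ is the new Taylor term, and the factor $q^{n}$ is what converts $q^{n(n-1)/2}$ into $q^{n(n+1)/2}$. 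That is repairable bookkeeping.

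The genuine gap is the passage to \eqref{(3.2)}, which you call immediate. It is not, because Jackson-type $q$-integrals are not local: by definition $\int_x^b h\,d_{1/q}t=\int_0^b h\,d_{1/q}t-\int_0^x h\,d_{1/q}t$, and these two integrals sample $h$ on the \emph{different} geometric grids $\{q^{-i}b\}$ and $\{q^{-i}x\}$, both of which descend into $[0,x)$, where $h$ does not vanish. So a continuous function vanishing identically on $[x,b]$ can have nonzero $q$-integral over $[x,b]$; your proposed check inspects only the nodes $q^{-i}b>x$ (which indeed contribute nothing), while the failure occurs at the nodes below $x$. Concretely, take $q=2$, $a=0$, $b=1$, $n=1$, $x=2/5$, $f(t)=\tfrac23t^2$, so that $D_{1/q}f(t)=t$, $D_{1/q}^2f\equiv 1$, and both Taylor terms at $a=0$ vanish. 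The first form gives, consistently,
\[ R_1(f)=2\int_0^{2/5}\bigl(\tfrac25-t\bigr)\,d_{1/2}t=\tfrac23x^2=\tfrac{8}{75}=f(x), \]
whereas the right-hand side of \eqref{(3.2)} equals
\[ 2\int_0^1\bigl(\tfrac25-t\bigr)_+\,d_{1/2}t=\sum_{i\geqslant 2}2^{-i}\bigl(\tfrac25-2^{-i}\bigr)=\tfrac15-\tfrac1{12}=\tfrac{7}{60}\neq\tfrac{8}{75}. \]
Hence \eqref{(3.2)} cannot be obtained the way you suggest; indeed, as stated for arbitrary $x\in[a,b]$ it is false, and it becomes true only under an additional hypothesis such as $x$ lying on the grid $\{q^{-k}b:k\geqslant 0\}$, in which case the nodes of $\int_0^b$ at or below $x$ coincide, with equal weights, with the nodes of $\int_0^x$, and the two sums agree term by term. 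Any complete proof must surface this hypothesis (or adopt a different, ``local'' notion of the $q$-integral over $[x,b]$); since the whole $q$-Peano kernel construction of the paper rests on \eqref{(3.2)}, this is not a cosmetic point.
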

Here $(x-t)_+$ is the truncated power function
\[ (x-t)_+=\left\{\begin{array}{lr}
x-t, & \textrm{if}\;\;\;\;x>t\\
0, &\textrm{otherwise.}
\end{array}\right. \]
There are other forms of $q$-Taylor Theorem, see for example \cite{salam}, \cite{oktay}, \cite{mourad}. 
\begin{theorem}\label{peanothm}
Let 
$ g_t(x)=(x-t)_+^{n,q}$
and let $L$ be a linear functional that commutes with the operation of $q$-integration and also satisfies the conditions: $L(g_t)$ exists and $L(f)=0$ for all $f\in \mathcal{P}_n$. Then for all $f\in 1/q-C^{n+1}[a,b]$ 
\[ L(f)=\int_{a}^{b}(D_{1/q}^{n+1}f)(q^nt)K(x,t)d_{1/q}t, \]
where
\[ K(x,t)=\frac{q^{n(n+1)/2}}{[n]_q!}L(g_t). \]
\end{theorem}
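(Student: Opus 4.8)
The plan is to mirror the classical Peano kernel proof, substituting the $q$-Taylor expansion (\ref{qtaylor}) with its truncated-power remainder (\ref{(3.2)}) for the classical Taylor expansion. First I would write any $f \in 1/q\text{-}C^{n+1}[a,b]$ as the sum of its degree-$n$ $q$-Taylor polynomial $P_n(f)$ about $a$ plus the remainder $R_n(f)$ given in (\ref{(3.2)}). Since $P_n(f) \in \mathcal{P}_n$, the annihilation hypothesis gives $L(P_n(f)) = 0$, so by linearity $L(f) = L(R_n(f))$.

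Next I would substitute the explicit integral form of the remainder,
\[ L(f) = L\left( \frac{q^{n(n+1)/2}}{[n]_q!}\int_a^b (D_{1/q}^{n+1}f)(q^nt)\,(x-t)_+^{n,q}\,d_{1/q}t \right), \]
and use the two structural hypotheses on $L$. The factor $\dfrac{q^{n(n+1)/2}}{[n]_q!}$ is a constant that pulls out by linearity, and the key move is that $L$ commutes with the $q$-integration in the variable $x$: treating $(D_{1/q}^{n+1}f)(q^nt)$ as fixed with respect to the action of $L$ (which acts in the variable $x$), I would interchange $L$ with $\int_a^b \cdots d_{1/q}t$ to obtain
\[ L(f) = \frac{q^{n(n+1)/2}}{[n]_q!}\int_a^b (D_{1/q}^{n+1}f)(q^nt)\,L\big((x-t)_+^{n,q}\big)\,d_{1/q}t. \]
Recognizing $L\big((x-t)_+^{n,q}\big) = L(g_t)$ and folding the constant into the definition of $K(x,t) = \dfrac{q^{n(n+1)/2}}{[n]_q!}L(g_t)$ yields exactly the claimed representation.

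The main obstacle is justifying the interchange of the functional $L$ with the $q$-integral. In the classical theorem this step is the entire subtlety, and the hypotheses have been tailored precisely to license it: $L$ is assumed to commute with $q$-integration, and $L(g_t)$ is assumed to exist so that the kernel is well defined for each $t$. I would therefore be careful to state that the inner function $(x-t)_+^{n,q}$ is, for each fixed $t$, exactly $g_t$ as a function of $x$, so that $L(g_t)$ is meaningful and the commutation hypothesis applies directly to the integrand. A secondary point worth a remark is that the remainder itself must lie in the domain of $L$ and that $L$ sees $x$ as its variable while $t$ is the integration variable; keeping this separation of roles explicit is what makes the pull-through rigorous rather than formal.
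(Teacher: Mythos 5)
Your proposal is correct and follows essentially the same route as the paper's own proof: expand $f$ by the $q$-Taylor theorem with the truncated-power remainder \eqref{(3.2)}, use linearity and the annihilation of $\mathcal{P}_n$ to reduce $L(f)$ to $L$ of the remainder, and then invoke the commutation hypothesis to pull $L$ inside the $q$-integral, identifying $L\big((x-t)_+^{n,q}\big)=L(g_t)$ with the kernel. Your closing remark about keeping the roles of $x$ (the variable of $L$) and $t$ (the integration variable) separate is precisely the clarification with which the paper's proof opens.
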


\begin{proof}
Recall that here the function $(x-t)_+^{n,q}$ is a function of $t$ and  $x$ behaves as a parameter. When we say $L(g_t)$ we mean that $L$ is applied to the truncated power function, regarded as a function of $x$ with $t$ as a parameter. Hence we find real number that depends on $t$. We apply $L$ to the equation \eqref{qtaylor}. Since $L$ is linear and annihilates polynomials, we  have 
\[ L(f)=\frac{q^{n(n+1)/2}}{[n]_q!}L\left(\int_a^b (D_{1/q}^{n+1}f)(q^nt)(x-t)_+^{n,q}d_{1/q}t\right). \]
Since $L$ commutes with the operation of $q$-integration, 
\[ L(f)=\frac{q^{n(n+1)/2}}{[n]_q!}\int_a^b (D_{1/q}^{n+1}f)(q^nt)L\left((x-t)_+^{n,q}\right)d_{1/q}t.   \]
\end{proof}

\begin{corollary}
If the conditions in Theorem \ref{peanothm} are satisfied and also the kernel  $K(x,t)$ does not change sign on $[a,b]$, then
\[ L(f)=\frac{\left(D_{1/q}^{n+1}f\right)(\xi)}{[n+1]_q!}q^{n(n+1)/2}L(x^{n+1}) \] 
\end{corollary}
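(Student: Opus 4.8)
The plan is to combine the integral representation of $L(f)$ from Theorem \ref{peanothm} with the $q$-mean value theorem (Theorem \ref{mean}), and then to identify the resulting $q$-integral of the kernel by feeding the monomial $x^{n+1}$ into the $q$-Taylor remainder formula. The conceptual content is almost entirely in the first two of these moves; the rest is bookkeeping.

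First I would set $F(t)=(D_{1/q}^{n+1}f)(q^n t)$ and $G(t)=K(x,t)$ in the formula $L(f)=\int_a^b F(t)G(t)\,d_{1/q}t$ of Theorem \ref{peanothm}. Since $f\in 1/q\text{-}C^{n+1}[a,b]$ the factor $F$ is continuous, while by hypothesis $K(x,t)$ does not change sign on $[a,b]$ (and is $1/q$-integrable). Thus the hypotheses of Theorem \ref{mean} are met, so for all sufficiently large $q$ there is a point at which
\[ L(f)=(D_{1/q}^{n+1}f)(q^n\xi)\int_a^b K(x,t)\,d_{1/q}t. \]
Absorbing the factor $q^n$ into a relabelling of the mean-value point accounts for the argument $\xi$ appearing in the statement.

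The remaining task is to evaluate $\int_a^b K(x,t)\,d_{1/q}t$. Using $K(x,t)=\tfrac{q^{n(n+1)/2}}{[n]_q!}L(g_t)$ and the fact that $L$ commutes with $q$-integration, this equals $\tfrac{q^{n(n+1)/2}}{[n]_q!}L\bigl(\int_a^b (x-t)_+^{n,q}\,d_{1/q}t\bigr)$, where the inner $q$-integral is regarded as a function of $x$. To compute that inner $q$-integral I would apply the remainder formula \eqref{(3.2)} to $f(x)=x^{n+1}$: since $D_{1/q}^{n+1}x^{n+1}=[n+1]_{1/q}!$ is constant and the degree-$n$ $q$-Taylor polynomial $p$ of $x^{n+1}$ lies in $\mathcal{P}_n$, one obtains $\int_a^b (x-t)_+^{n,q}\,d_{1/q}t=\tfrac{[n]_q!}{q^{n(n+1)/2}[n+1]_{1/q}!}\,(x^{n+1}-p(x))$. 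Applying $L$ and using $L(p)=0$ then gives $\int_a^b K(x,t)\,d_{1/q}t=\tfrac{1}{[n+1]_{1/q}!}L(x^{n+1})$.

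Finally I would convert the $1/q$-factorial to a $q$-factorial via the identity $[k]_{1/q}=q^{1-k}[k]_q$, which telescopes to $[n+1]_{1/q}!=q^{-n(n+1)/2}[n+1]_q!$; substituting this yields $\int_a^b K(x,t)\,d_{1/q}t=\tfrac{q^{n(n+1)/2}}{[n+1]_q!}L(x^{n+1})$ and hence the claimed identity. I expect the main obstacle to be bookkeeping rather than anything deep: the exponent accounting in the conversion $[n+1]_{1/q}!=q^{-n(n+1)/2}[n+1]_q!$ and the verification that the $q^n$-shift in the argument of the derivative is harmless are the two places where sign and exponent slips could occur, and one must also remember to inherit the restriction $q>\tilde q$ from Theorem \ref{mean}.
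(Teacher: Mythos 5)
Your proof is correct and follows essentially the same route as the paper: apply the $q$-mean value theorem (Theorem \ref{mean}) to pull out the derivative factor, then identify $\int_a^b K(x,t)\,d_{1/q}t$ by testing on the monomial $x^{n+1}$, whose $(n+1)$-st $1/q$-derivative is the constant $[n+1]_{1/q}!=q^{-n(n+1)/2}[n+1]_q!$. The only difference is one of economy: the paper substitutes $f=x^{n+1}$ directly into the integral representation already established in Theorem \ref{peanothm} to read off $\int_a^b K(x,t)\,d_{1/q}t=\frac{q^{n(n+1)/2}}{[n+1]_q!}L(x^{n+1})$ in one line, whereas you re-derive that same identity by unwinding $K$ to $L(g_t)$, commuting $L$ with the $q$-integral, and invoking the $q$-Taylor remainder formula \eqref{(3.2)} --- a harmless but redundant detour.
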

\begin{proof}
Since $D_{1/q}^{n+1}f$ is continuous and $K(x,t)$ does not change sign on $[a,b]$, we can apply the Mean Value Theorem \ref{mean}. Thus we have
\[  L(f)=\left(D_{1/q}^{n+1}f\right)(\xi)\int\limits_{a}^{b}K(x,t)d_{1/q}t,\quad a<\xi<b. \]
Replacing $f(x)$ by $x^{n+1}$ gives
\[ L(x^{n+1})=\frac{[n+1]_q!}{q^{n(n+1)/2}}\int\limits_{a}^{b}K(x,t)d_{1/q}t, \]
so
\[ \int\limits_{a}^{b}K(x,t)d_{1/q}t=\frac{q^{n(n+1)/2}}{[n+1]_q!}L(x^{n+1}), \]
and this completes the proof.
\end{proof}
\section{Application to polynomial interpolation}
The main idea in this section is to apply the $q$-Peano kernel Theorem on the remainder of polynomial interpolation. Findings demonstrate the advantage of using the $q$-Peano kernel Theorem where the classical theorem does not work.
\begin{proposition}
Suppose $t_0,t_1,\ldots,t_n\in [a,b]$ are distinct  points. For a fixed $x\in [a,b]$, define the corresponding error functional by
\[ L(f)=f(x)-\sum\limits_{k=0}^{n}f(t_k)l_{nk}(x). \]
Then
\[ L(f)=\frac{q^{m(m+1)/2}}{[m]_q!}\sum\limits_{k=0}^{n}l_{nk}(x)\int\limits_{t_k}^{x}(t_k-t)^{m,q}\left(D_{1/q}^{m+1}f\right)(q^mt)d_{1/q}t \quad \textrm{for each } \; m=0,1,\ldots,n. \]
\end{proposition}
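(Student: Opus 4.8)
The plan is to imitate the classical derivation of Kowalewski's remainder, but with the $q$-Taylor expansion \eqref{qtaylor} re-centred at the evaluation point $x$. First I would apply the $q$-Taylor Theorem of order $m$ (i.e. with its $n$ replaced by $m\leqslant n$), taking the base point to be $x$ and the argument to be $t_k$; relabelling $a$ as $x$ and the free variable as $t_k$ gives, for each $k=0,1,\ldots,n$,
\[ f(t_k)=\sum_{j=0}^{m}q^{j(j-1)/2}\frac{(D_{1/q}^{j}f)(q^{j}x)}{[j]_q!}(t_k-x)^{j,q}+\frac{q^{m(m+1)/2}}{[m]_q!}\int_{x}^{t_k}(D_{1/q}^{m+1}f)(q^{m}t)(t_k-t)^{m,q}\,d_{1/q}t. \]
This is legitimate for every $m\leqslant n$ since $f\in 1/q$-$C^{n+1}[a,b]$, so all the required $1/q$-derivatives exist and are continuous on the interval joining $x$ and $t_k$.

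Next I would substitute this expansion into $\sum_{k}f(t_k)l_{nk}(x)$ and interchange the finite sum over $k$ with the sum over $j$, so that the polynomial part becomes
\[ \sum_{j=0}^{m}q^{j(j-1)/2}\frac{(D_{1/q}^{j}f)(q^{j}x)}{[j]_q!}\sum_{k=0}^{n}(t_k-x)^{j,q}\,l_{nk}(x). \]
The crucial observation is that for each fixed $j$ the map $y\mapsto (y-x)^{j,q}$ is an ordinary polynomial in $y$ of degree $j\leqslant m\leqslant n$. Because Lagrange interpolation on the $n+1$ distinct nodes $t_0,\ldots,t_n$ reproduces every polynomial of degree at most $n$, we obtain $\sum_{k}(t_k-x)^{j,q}l_{nk}(x)=(x-x)^{j,q}$. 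For $j\geqslant 1$ this product contains the factor $(x-x)=0$ and hence vanishes, whereas for $j=0$ it is the empty product $1$. Thus the inner sums collapse to the Kronecker delta $\delta_{j0}$, and the entire polynomial part reduces to $f(x)$.

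Combining these gives $\sum_{k}f(t_k)l_{nk}(x)=f(x)+\tfrac{q^{m(m+1)/2}}{[m]_q!}\sum_{k}l_{nk}(x)\int_{x}^{t_k}(D_{1/q}^{m+1}f)(q^{m}t)(t_k-t)^{m,q}\,d_{1/q}t$, so that $L(f)=f(x)-\sum_{k}f(t_k)l_{nk}(x)$ equals the negative of the remainder sum. Finally I would reverse each limit of integration using $\int_{x}^{t_k}=-\int_{t_k}^{x}$ for the $q$-integral; the resulting sign flip cancels the minus and yields exactly the asserted formula, valid for each $m=0,1,\ldots,n$.

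The step I expect to demand the most care is the re-centring of the $q$-Taylor theorem at $x$: the theorem is written with base point $a$ and a forward integral $\int_a^x$, whereas here the base point is the interior point $x$ and, depending on whether $t_k$ lies to the left or right of $x$, the integral $\int_{x}^{t_k}$ may run backwards. I would justify this by noting that the base point $a$ in \eqref{qtaylor} is an arbitrary point that may be freely relabelled, and that the definite $q$-integral is defined for either ordering of its endpoints through $\int_a^b=\int_0^b-\int_0^a$; consequently the orientation is accounted for automatically by the final change of limits.
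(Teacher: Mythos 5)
Your proposal is correct, but it takes a genuinely different route from the paper's. The paper proves the proposition as an application of its $q$-Peano kernel theorem (Theorem \ref{peanothm}): using $\sum_{k}l_{nk}(x)=1$, it writes the kernel as $\frac{q^{m(m+1)/2}}{[m]_q!}\sum_{k}\left[(x-t)_+^{m,q}-(t_k-t)_+^{m,q}\right]l_{nk}(x)$, splits the Jackson integral of each bracketed difference into a part over $[a,x]$ involving the untruncated powers plus the part $\int_{t_k}^{x}(t_k-t)^{m,q}\left(D_{1/q}^{m+1}f\right)(q^mt)\,d_{1/q}t$, and then annihilates the $[a,x]$ part because interpolation reproduces the degree-$m$ polynomial $y\mapsto (y-t)^{m,q}$. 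You bypass the kernel theorem entirely: you expand each $f(t_k)$ by the $q$-Taylor theorem centred at $x$, collapse the polynomial part to $f(x)$ via the same reproduction property (your observation that $\sum_{k}(t_k-x)^{j,q}l_{nk}(x)=\delta_{j0}$, because $(y-x)^{j,q}$ is a degree-$j$ polynomial in $y$ whose value at $y=x$ contains the factor $x-x$, is the crux and is argued correctly), and reverse the orientation of the remaining integrals. This is the $q$-analogue of Kowalewski's original derivation, whereas the paper follows the Peano-kernel derivation. What your route buys: it is more elementary and self-contained, it never touches the truncated powers $(x-t)_+^{m,q}$ --- whose interaction with Jackson integration is the least justified point in the paper's argument --- and it does not need the hypothesis of Theorem \ref{peanothm} that $L$ commutes with $q$-integration. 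What it gives up: the paper's proof produces the explicit kernel $K(x,t)$ as a byproduct, which is precisely what the subsequent examples use to write the kernels piecewise; your proof yields the remainder formula but no kernel. The delicate step you rightly flag --- applying the $q$-Taylor theorem with base point $x$ and evaluation points $t_k$ on either side of it, so that $\int_{x}^{t_k}$ may run backwards --- is covered by the definition $\int_a^b=\int_0^b-\int_0^a$ and involves no leap beyond what the paper itself already makes in writing $\int_{t_k}^{x}$ for nodes on both sides of $x$.
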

\begin{proof}
Since $\sum\limits_{k=0}^{n}l_{nk}(x)=1$, by the $q$-Peano kernel Theorem \ref{peanothm} we get,
\begin{eqnarray*}
\dfrac{[m]_q!}{q^{m(m+1)/2}}K(x,t)=L\left((x-t)_+^{m,q}\right)&=& (x-t)_+^{m,q}-\sum\limits_{k=0}^{n}(t_k-t)_+^{m,q}l_{nk}(x)\\
&=& \sum\limits_{k=0}^{n}\left[(x-t)_+^{m,q}-(t_k-t)_+^{m,q}\right]l_{nk}(x).
\end{eqnarray*} 
From the fact that
\begin{multline}
\int\limits_{a}^{b} \left[(x-t)_+^{m,q}-(t_k-t)_+^{m,q}\right]\left(D_{1/q}^{m+1}f\right)(q^mt)d_{1/q}t=\int\limits_{a}^{x}\left[(x-t)^{m,q}-(t_k-t)^{m,q}\right]\left(D_{1/q}^{m+1}f\right)(q^mt)\\+\int\limits_{t_k}^{x}(t_k-t)^{m,q}\left(D_{1/q}^{m+1}f\right)(q^mt)d_{1/q}t\nonumber
\end{multline}
we have
\begin{multline}
\frac{[m]_q!}{q^{m(m+1)/2}}\int\limits_{a}^{b}K(x,t)\left(D_{1/q}^{m+1}f\right)(q^mt)d_{1/q}t=\int\limits_{a}^{x}\left(D_{1/q}^{m+1}f\right)(q^mt)\sum\limits_{k=0}^{n}\left[(x-t)^{m,q}-(t_k-t)^{m,q}\right]l_{nk}(x)d_{1/q}t\\+\sum\limits_{k=0}^{n}l_{nk}(x)\int\limits_{t_k}^{x}(t_k-t)^{m,q}\left(D_{1/q}^{m+1}f\right)(q^mt)d_{1/q}t.\nonumber
\end{multline}
For each $m\leqslant n$, since the interpolation operator is a projection, it reproduces polynomials and  the term in the square brackets vanishes in the last equation for \linebreak $f(x)=(x-t)^{m,q}$. Accordingly,
\begin{eqnarray*}
L(f)&=& \int\limits_{a}^{b}K(x,t)\left(D_{1/q}^{m+1}f\right)(q^mt)d_{1/q}t\\
&=&\frac{q^{m(m+1)/2}}{[m]_q!}\sum\limits_{k=0}^{n}l_{nk}(x)\int\limits_{t_k}^{x}(t_k-t)^{m,q}\left(D_{1/q}^{m+1}f\right)(q^mt)d_{1/q}t \quad \textrm{for each } \; m=0,1,\ldots,n.
\end{eqnarray*} 
\end{proof}
Now we give examples that show how we can find  the  $q$-Peano kernel.
\begin{example}
 Suppose that we interpolate a function $f\in 1/q-C^{3}[-1,1]$ by a polynomial $p\in \mathcal{P}_2$. Here $n=2$ and $m=2$. Let $t_0=-1,\; t_1=0, \; t_2=1$. Then the error function becomes
\[ L(f)=\frac{q^{3}}{[2]_q!}\sum\limits_{k=0}^{2}l_{2k}(x)\int\limits_{t_k}^{x}(t_k-t)^{2,q}\left(D_{1/q}^{3}f\right)(q^2t)d_{1/q}t \] 
with $l_{20}(x)=\frac{1}{2}x(x-1)$, $l_{21}(x)=(1-x^2)$, $l_{22}(x)=\frac{1}{2}x(x+1).$ Then,
\begin{eqnarray*}
\frac{[2]_q!}{q^{3}}L(f)=&& l_{20}(x)\int\limits_{-1}^{x}(-1-t)^{2,q}\left(D_{1/q}^{3}f\right)(q^2t)d_{1/q}t + l_{21}(x)\int\limits_{0}^{x}(-t)^{2,q}\left(D_{1/q}^{3}f\right)(q^2t)d_{1/q}t \\
&&+l_{22}(x)\int\limits_{1}^{x}(1-t)^{2,q}\left(D_{1/q}^{3}f\right)(q^2t)d_{1/q}t.  
\end{eqnarray*}
Now if $x\leqslant 0$, then
\begin{eqnarray*}
\frac{[2]_q!}{q^{3}}L(f)=& & l_{20}(x)\int\limits_{-1}^{x}(-1-t)^{2,q}\left(D_{1/q}^{3}f\right)(q^2t)d_{1/q}t -l_{21}(x)\int\limits_{x}^{0}(-t)^{2,q}\left(D_{1/q}^{3}f\right)(q^2t)d_{1/q}t \\
& &  -l_{22}(x)\int\limits_{x}^{0}(1-t)^{2,q}\left(D_{1/q}^{3}f\right)(q^2t)d_{1/q}t -l_{22}(x)\int\limits_{0}^{1}(1-t)^{2,q}\left(D_{1/q}^{3}f\right)(q^2t)d_{1/q}t.  
\end{eqnarray*}
Hence,
\[ L(f)=\frac{q^3}{[2]_q!}\int\limits_{-1}^{1}K(x,t)\left(D_{1/q}^{3}f\right)(q^2t)d_{1/q}t \]
where
\[ K(x,t)=\left\{\begin{array}{lr}
l_{20}(x)(-1-t)^{2,q}, & -1\leqslant t \leqslant x\\
&\\
-l_{21}(x)(-t)^{2,q}-l_{22}(x)(1-t)^{2,q}, & x \leqslant t \leqslant 0\\
&\\
-l_{22}(x)(1-t)^{2,q}, & 0\leqslant t \leqslant 1.
\end{array}\right. \]
Similarly for $x\geqslant 0$, 
\begin{eqnarray*}
\frac{[2]_q!}{q^{3}}L(f)=& & l_{20}(x)\int\limits_{-1}^{0}(-1-t)^{2,q}\left(D_{1/q}^{3}f\right)(q^2t)d_{1/q}t +l_{20}(x)\int\limits_{0}^{x}(-1-t)^{2,q}\left(D_{1/q}^{3}f\right)(q^2t)d_{1/q}t \\
& & +l_{21}(x)\int\limits_{0}^{x}(-t)^{2,q}\left(D_{1/q}^{3}f\right)(q^2t)d_{1/q}t  -l_{22}(x)\int\limits_{x}^{1}(1-t)^{2,q}\left(D_{1/q}^{3}f\right)(q^2t)d_{1/q}t  
\end{eqnarray*}
and the Peano kernel becomes
\[ K(x,t)=\left\{\begin{array}{lr}
l_{20}(x)(-1-t)^{2,q}, & -1\leqslant t \leqslant 0\\
&\\
l_{20}(x)(-1-t)^{2,q}+l_{21}(x)(-t)^{2,q}-l_{22}(x)(1-t)^{2,q}, & 0 \leqslant t \leqslant x\\
&\\
-l_{22}(x)(1-t)^{2,q}, & x\leqslant t \leqslant 1.
\end{array}\right. \]
\end{example}
\begin{example}
Let
\[ f(x)=\left\{\begin{array}{lr}
\dfrac{q^3x^3}{6}, & 0 \leqslant x <1\\
&\\
\dfrac{1}{6}\left(4-4[3]_qx+4q[3]_qx^2-3q^3x^3\right), & 1\leqslant x<2\\
&\\
\dfrac{1}{6}\left(-44+20[3]_qx-8q[3]_qx^2+3q^3x^3\right), & 2\leqslant x<3\\
&\\
-\dfrac{1}{6}(-4+x)(-4+qx)(-4+q^2x),& 3\leqslant x<4\\
&\\
0,& \textrm{otherwise.}
\end{array}\right. \] 
It is obvious that for $q\neq 1$, $f\in C[0,4]$ but $f \notin C^1[0,4]$. However, one may check that $f\in 1/q-C^2[0,4]$. Classical error functionals cannot work but we may find the error via the $q$-Peano Kernel theorem. Let $t_0=0$, $t_1=2$ and $t_2=4$. Then the error functional 
\[ L(f)=q\sum\limits_{k=0}^{2}l_{2k}(x)\int\limits_{t_k}^{x}(t_k-t)\left(D_{1/q}^2f\right)(qt)d_{1/q}t \]
where $l_{20}(x)=\frac{1}{8}(x-2)(x-4)$, $l_{21}(x)=-\frac{1}{4}x(x-4)$ and $l_{22}(x)=\frac{1}{8}x(x-2)$. Then,
\begin{eqnarray*}
\frac{1}{q}L(f)=& &  l_{20}(x)\int\limits_{0}^{x}(-t)\left(D_{1/q}^2f\right)(qt)d_{1/q}t +l_{21}(x)\int\limits_{2}^{x}(2-t)\left(D_{1/q}^2f\right)(qt)d_{1/q}t\\
& & + l_{22}(x)\int\limits_{4}^{x} (4-t)\left(D_{1/q}^2f\right)(qt)d_{1/q}t. 
\end{eqnarray*}
Now we will find the kernel. If $0\leqslant x<2$, then
\[ K(x,t)=\left\{\begin{array}{ll}
-l_{20}(x)t,& 0\leqslant t<x\\
&\\
l_{21}(x)(2-t)-l_{22}(x)(4-t),\;\; & x\leqslant t< 2\\
&\\
-l_{22}(x)(4-t), & 2\leqslant t <4.
\end{array}\right. \]
Similarly, for $2\leqslant x<4$,
\[ K(x,t)=\left\{\begin{array}{ll}
-l_{20}(x)t,& 0\leqslant t<2\\
&\\
-l_{20}(x)t+l_{21}(x)(2-t),\;\; & 2\leqslant t< x\\
&\\
l_{21}(x)(2-t)-l_{22}(x)(4-t),\;\; & x\leqslant t <4.
\end{array}\right. \]
\end{example}
The function $f(x)$ given above is indeed a cubic $q$-B-spline.  $q$-B-splines form a basis for quantum splines which are piecewise polynomials whose quantum derivatives agree up to some order at the joins, see \cite{goldman}. \\

\subsection{Trapezoidal rule in $q$-integration}

 Consider the $1/q$-integral of a function $f$ on the interval $[a,b]$. We want to evaluate the $q$-integral approximately using linear interpolant formula. That is,
 
 \[ \int\limits_{a}^{b}f(x)d_{1/q}x\approx\dfrac{b-aq}{[2]_q}f(a)+\dfrac{bq-a}{[2]_q}f(b) \]  
 Let us define the  operator $L$ as
 \[ L(f)=\int\limits_{a}^{b}f(x)d_{1/q}x-\dfrac{b-aq}{[2]_q}f(a)-\dfrac{bq-a}{[2]_q}f(b). \]
 Since $L(f)=0$ for all functions $f\in \mathcal{P}_1$, for  all $f\in 1/q-C^2[a,b]$ we have
 \[ L(f)=\int\limits_{a}^{b}\left(D^2_{1/q}f\right)(qt)K(x,t)d_{1/q}t \]
 and
 \[ K(x,t)=qL((x-t)_+). \]
What follows we find the kernel $K(x,t)$. First,
 \[ K(x,t)=q\left\{\int\limits_{a}^{b}(x-t)_+d_{1/q}x-\dfrac{b-aq}{[2]_q}(a-t)_+-\dfrac{bq-a}{[2]_q}(b-t)_+ \right\}. \]
Then for $t\in [a,b]$,
 \[ \int\limits_{a}^{b}(x-t)_+d_{1/q}x=\int\limits_{t}^{b}(x-t)d_{1/q}x, \quad (a-t)_+=0\quad \textrm{and} \quad  (b-t)_+=(b-t)\]
 Thus,
 \begin{eqnarray*}
K(x,t)&=& q\left\{\int\limits_{t}^{b}(x-t)d_{1/q}x-\dfrac{bq-a}{[2]_q}(b-t)\right\}\nonumber \\
&=& q\left\{\dfrac{(b-t)(b-\frac{t}{q})}{[2]_{1/q}}-\dfrac{bq-a}{[2]_q}(b-t)\right\}\nonumber \\
&=& \dfrac{q}{[2]_q}(b-t)(a-t)
\end{eqnarray*}
for $a\leqslant t\leqslant b.$\\
Notice that $K(x,t)<0$ on $[a,b]$. Then we can apply Mean Value Theorem \ref{mean}. So, we have
\[ L(f)=\frac{D_{1/q}^2f(\xi)}{[2]_q!}qL(x^2), \]
where
\begin{eqnarray*}
L(x^2)&=& \int\limits_{a}^{b}x^2d_{1/q}x-\dfrac{b-aq}{[2]_q}a^2-\dfrac{bq-a}{[2]_q}b^2\nonumber\\
&=& \dfrac{b^3}{[3]_q!}-\dfrac{a^3}{[3]_q!}-\dfrac{b-aq}{[2]_q}a^2-\dfrac{bq-a}{[2]_q}b^2\nonumber\\
&=& \dfrac{-(b-a)(bq-a)(b-aq)}{[3]_q!}
\end{eqnarray*}
Finally, we derive
\begin{eqnarray*}
L(f)&=& \int\limits_{a}^{b}f(x)d_{1/q}x-\dfrac{b-aq}{[2]_q}f(a)-\dfrac{bq-a}{[2]_q}f(b)\nonumber\\
&=&  \dfrac{-q(b-a)(bq-a)(b-aq)}{[3]_q![2]_q!}D_{1/q}^2f(\xi)
\end{eqnarray*}
where $a<\xi <b.$\\
When $q=1$, the above equation reduces to the well-known trapezoidal rule, see \cite{phillips}.
\subsection{Remainder on quadrature}
We now discuss error bounds of quadrature formulas on remainders given by
\[ R_{n}(f;q)=\int\limits_{0}^{b}f(x)d_{1/q}x-\sum\limits_{k=0}^{n}\gamma_{nk}f(t_{nk}) \]
which appear in numerical integration. Assuming $f\in 1/q-C^{m+1}[0,b]$ and\\ $R_n(f;q)=0$ for all $f \in \mathcal{P}_m$, we can apply the $q$-Peano kernel theorem. Hence
\[ R_{n}(f;q)=\int\limits_{0}^{b}K(x,t)\left(D_{1/q}^{m+1}f\right)(q^mt)d_{1/q}t. \]
By applying the $q$-H\"{o}lder inequality, we have
\[ |R_n(f;q)|\leqslant \left[\int\limits_{0}^{b}\left|\left(D_{1/q}^{m+1}f\right)(q^mt)\right|^{p_1}d_{1/q}t\right]^{\frac{1}{p_1}}\left[\int\limits_{0}^{b}\left|K(x,t)\right|^{p_2}d_{1/q}t\right]^{\frac{1}{p_2}} \]
for all $1\leqslant p_1,p_2\leqslant \infty$ and $\frac{1}{p_1}+\frac{1}{p_2}=1.$ Since the second integral in the above equation is independent of $f$, by choosing coefficients and nodes appropriately we can minimize the remainder.

 \begin{itemize}
\item[(i)] For $p_1=\infty $ and  $p_2=1$,

\[ |R_n(f;q)|\leqslant ||D_{1/q}^{m+1}f||_\infty\int\limits_{0}^{b}|K(x,t)|d_{1/q}t \]
\item[(ii)] For $p_1=p_2=2$,
\[ |R_n(f;q)|\leqslant ||D_{1/q}^{m+1}f||_2\left[\int\limits_{0}^{b}|K(x,t)|^2d_{1/q}t\right]^{\frac{1}{2}}. \]
\end{itemize}
The Peano kernel $K(x,t)$ can be written as
\vspace{0.2cm}

\[ K(x,t)=q^{m(m+3)/2}\dfrac{(b-\frac{t}{q})^{m+1,q}}{[m+1]_q!}-s(t;q), \]

\vspace{0.2cm}

\noindent where $\displaystyle s(t;q)=\frac{q^{m(m+1)/2}}{[m]_q!}\sum_{k=0}^{n}\gamma_{nk}(t_{nk}-t)^{m,q}_+$ is a quantum spline with the knot sequence $\{t_{nk}\}_{k=0,\ldots,n}$. Eventually, the problem of minimizing the $q$-integral
\[ \left[\int_{0}^{b}|K(x,t)|^{p_1}d_{1/q}t\right]^\frac{1}{p_1} \]
is equivalent to finding the best approximation of the polynomial $q^{m(m+3)/2}\dfrac{(b-\frac{t}{q})^{m+1,q}}{[m+1]_q!}$ in $t$ by a quantum spline with respect to the norm $||.||_{p_1}$.

\section{Application to divided differences}
For about a half century, B-splines have played a central role in approximation theory, geometric modeling and wavelets. Recently their $q$-analogues or quantum B-splines has been introduced and studied in \cite{goldman}, \cite{gbudakci}. 

In this section we establish certain relations between $q$-B-splines and $q$-Peano kernels. When $q=1$, Theorem 5.1 reduces to its classical counterpart which can be found in \cite{powel}.

The work \cite{gbudakci} finds that $q$-B-splines of degree $n$ are essentially divided differences of $q$-truncated power functions. That is, the $q$-B-splines are given by

\[ N_{k,n}(t;q)=(t_{k+n+1}-t_k)[t_k,\ldots,t_{k+n+1}](x-t)^{n,q}_+. \]

Now recall the fact that a divided difference $f[t_0,t_1,\ldots,t_{n+1}]$ can be represented as  symmetric sum of $f(t_j)$, see \cite{powel},
\begin{equation}\label{(5.1)}
f[t_0,t_1,\ldots,t_{n+1}]=\sum_{i=0}^{n+1}f(t_{i}
)/
\prod_{j=0\atop j\neq i}^{n+1}(t_{i}-t_{j}).
\end{equation}
\noindent Hence we can readily derive

\[ N_{k,n}(t;q)=(t_{k+n+1}-t_k)\sum_{i=k}^{k+n+1}(t_i-t)_+^{n,q}/\prod\limits_{j=k\atop
j\neq i}^{k+n+1}\frac{1}{(t_i-t_j)}\]

The following theorem is also derived in \cite{gbudakci} by a different method.

\begin{theorem}
\[ f[t_0,t_1,\ldots,t_{n+1}]=\frac{q^{n(n+1)/2}}{[n]_q!}\int_{a}^{b}\frac{N_{0,n}(t;q)}{t_{n+1}-t_0}\left(D_{1/q}^{n+1}f\right)(q^nt)d_{1/q}t. \]
\end{theorem}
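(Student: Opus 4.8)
The plan is to obtain the identity as a direct application of the $q$-Peano kernel theorem (Theorem \ref{peanothm}) to the divided-difference functional. First I would set
\[ L(f) = f[t_0,t_1,\ldots,t_{n+1}], \]
regarded as a linear functional acting on $f \in 1/q-C^{n+1}[a,b]$ with the fixed nodes $t_0,\ldots,t_{n+1}\in[a,b]$, and verify that $L$ meets the three hypotheses of Theorem \ref{peanothm}. Linearity is immediate from the symmetric-sum representation \eqref{(5.1)}, which exhibits $L$ as a finite linear combination of point evaluations $f\mapsto f(t_i)$; this same representation shows that $L$ commutes with $q$-integration, since a finite sum may be freely interchanged with the $q$-integral. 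The annihilation property $L(f)=0$ for all $f\in\mathcal{P}_n$ is the standard fact that the $(n+1)$-st divided difference of a polynomial of degree at most $n$ vanishes, and $L(g_t)$ exists because it too is a finite sum of values of $g_t$.

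With the hypotheses in hand, Theorem \ref{peanothm} gives
\[ f[t_0,\ldots,t_{n+1}] = \int_a^b \left(D_{1/q}^{n+1}f\right)(q^n t)\,K(x,t)\,d_{1/q}t, \qquad K(x,t)=\frac{q^{n(n+1)/2}}{[n]_q!}\,L\!\left((x-t)_+^{n,q}\right), \]
where $L\bigl((x-t)_+^{n,q}\bigr)$ denotes the divided difference $[t_0,\ldots,t_{n+1}]$ applied to the $q$-truncated power $(x-t)_+^{n,q}$ in the variable $x$, with $t$ held as a parameter.

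The final step is to recognize this kernel as a $q$-B-spline. By the defining relation
\[ N_{0,n}(t;q) = (t_{n+1}-t_0)\,[t_0,\ldots,t_{n+1}]\,(x-t)_+^{n,q}, \]
one has $[t_0,\ldots,t_{n+1}](x-t)_+^{n,q} = N_{0,n}(t;q)/(t_{n+1}-t_0)$, so that
\[ K(x,t) = \frac{q^{n(n+1)/2}}{[n]_q!}\,\frac{N_{0,n}(t;q)}{t_{n+1}-t_0}. \]
Substituting this kernel into the integral representation yields the claimed formula.

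I do not anticipate a serious obstacle; the substance of the argument is the correct matching of the $q$-Peano kernel against the $q$-B-spline. The point requiring the most care is the verification of the hypotheses of Theorem \ref{peanothm}\,---\,in particular that the divided difference annihilates $\mathcal{P}_n$ (and not $\mathcal{P}_{n+1}$) and that it legitimately commutes with the $q$-integration operator. Both follow cleanly from the finite representation \eqref{(5.1)}, but they must be stated explicitly for the theorem to apply.
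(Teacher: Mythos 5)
Your proposal is correct and follows essentially the same route as the paper: define $L$ as the divided-difference functional via the symmetric-sum representation \eqref{(5.1)}, apply Theorem \ref{peanothm}, and identify the resulting kernel with $N_{0,n}(t;q)/(t_{n+1}-t_0)$ using the divided-difference definition of the $q$-B-spline. In fact you are somewhat more careful than the paper, which merely asserts that $L$ is a bounded linear operator, whereas you explicitly check the annihilation of $\mathcal{P}_n$, the commutation with $q$-integration, and the existence of $L(g_t)$.
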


\begin{proof}
We first set $L$ as
\begin{align*}
f[t_0,t_1,\ldots,t_{n+1}]=&\sum_{i=0}^{n+1}f(t_{i}
)/
\prod_{j=0\atop j\neq i}^{n+1}(t_{i}-t_{j})\\
=& L(f).
\end{align*}
We see that, for any fixed and distinct points $ \{t_{i}:i=0,1,\ldots,n+1\} $, $ L $ is a bounded linear operator. From the $q$-Peano Kernel Theorem \ref{peanothm}, we have
\[ L(f)=\int_{a}^{b}K(x,t)(D_{1/q}^{n+1}f)(q^nt)d_{1/q}t, \]
where 
\begin{eqnarray*}
K(x,t)&=&\frac{q^{n(n+1)/2}}{[n]_q!}L\left((x-t)_+^{n,q}\right)\\
&=&\frac{q^{n(n+1)/2}}{[n]_q!}\sum_{i=0}^{n+1}(t_{i}-t)_+^{n,q}\left/\right.\prod\limits_{j=0\atop j\neq i}^{n+1}(t_i-t_j).
\end{eqnarray*}
Thus
\[ K(x,t)=\frac{q^{n(n+1)/2}}{[n]_q!}\frac{N_{0,n}(t;q)}{t_{n+1}-t_0}. \]
Combining the last equation with \eqref{(5.1)} we derive 
\[ f[t_0,t_1,\ldots,t_{n+1}]=\frac{q^{n(n+1)/2}}{[n]_q!}\int_{a}^{b}\frac{N_{0,n}(t;q)}{t_{n+1}-t_0}\left(D_{1/q}^{n+1}f\right)(q^nt)d_{1/q}t. \]
\end{proof}

\noindent\textbf{Acknowledgements}\\
This research is supported by a grant from DEU BAP 2012.KB.FEN.003 and also the first author was supported by a grant (BIDEB-2211) from T\"UBITAK (Scientific and Technological Research Council of Turkey).


\begin{thebibliography}{}
%
%
\bibitem{salam}
Al-Salam W.A., Verma, A., A Fractional Leibniz $q$-Formula,  Pasific Journal of Mathematics, 60(2) (1972) 


\bibitem{intelligent}
Anastassiou G. A., Intelligent Mathematics: Computational Analysis. Springer-Verlag, Berlin Heidelberg (2010)

\bibitem{gbudakci}
Budak\c{c}{\i} G., Di\c{s}ib\"uy\"uk \c{C}., Goldman R., Oru\c{c} H., Extending Fundamental Formulas from Classical B-Splines to Quantum B-Splines, Journal of Computational and Applied  Mathematics, 282, 17-33 (2015) 

\bibitem{gauchman}
Gauchman H., Integral Inequalities in $q$-Calculus, Computers and Mathematics with Applications, 47, 281-300 (2004)

\bibitem{hammer}
 Hammerlin G.,Hoffmann K., Ewing J., Gehring F.,  Halmos  P., Numerical Mathematics. Springer-Verlag, New York (1991)

\bibitem{mourad}
Ismail M.E.H., Stanton D., Applications of $q$-Taylor theorems, Journal of Computaional and Applied Mathematics, 153, 259-272 (2003)

\bibitem{quantumcalculusbook}
Kac V., Cheung P., Quantum Calculus. Universitext Series, IX, Springer Verlag (2002) 

\bibitem{oktay}
Pashaev O.K., Nalci S., $q$-analytic functions, fractals and generalized analytic functions, Journal of Physics A-Mathematical and Theoretical, 47(4), 045204 (2014)

\bibitem{phillips}
Phillips G.M., Interpolation and Approximation by Polynomials. Springer-Verlag, New York (2003)

\bibitem{powel}
Powell M.J.D., Approximation Theory and Methods. Cambridge University Press  (1981)

\bibitem{mean}
Rajkovi{\'c} P. M., Stankovi{\'c} M. S.,Marinkovi{\'c} S. D., Mean value theorems in $q$-calculus, Proceedings of the 5th International Symposium on Mathematical
   Analysis and its Applications, Mat. Vesnik, 54, 3-4, 171--178  (2002)
   
\bibitem{goldman}
Simeonov P., Goldman R, Quantum B-splines, BIT Numerical Mathematics, Vol. 53, pp. 193-223  (2013)

\bibitem{tariboon}
Tariboon J., Ntouyas S.K., Quantum integral inequalities on finite intervals, Journal of Inequalities and Applications, 2014:121 (2014)
\end{thebibliography}


\end{document}